\newtheorem{theorem}{Theorem}[section]
\newtheorem{lemma}[theorem]{Lemma}
\newtheorem{proposition}[theorem]{Proposition}
\def\11{\textbf{$1$}}
\begin{document}

\title[$\ell_\infty(\Gamma)$ satisfies the Mazur-Ulam property]{Extending surjective isometries defined on the unit sphere of $\ell_\infty(\Gamma)$}

\author[A.M. Peralta]{Antonio M. Peralta}

\address{Departamento de An{\'a}lisis Matem{\'a}tico, Facultad de
Ciencias, Universidad de Granada, 18071 Granada, Spain.}
\email{aperalta@ugr.es}

%\thanks{}

\subjclass[2010]{Primary 47B49, Secondary 46A22, 46B20, 46B04, 46A16, 46E40, .}

\keywords{Tingley's problem; Mazur-Ulam property; extension of isometries; $\ell_{\infty}(\Gamma)$}

\date{}

\begin{abstract} Let $\Gamma$ be an infinite set equipped with the discrete topology. We prove that the space $\ell_{\infty}(\Gamma),$ of all complex-valued bounded functions on $\Gamma$, satisfies the Mazur-Ulam property, that is, every surjective isometry from the unit sphere of $\ell_{\infty}(\Gamma)$ onto the unit sphere of an arbitrary complex Banach space $X$ admits a unique extension to a surjective real linear isometry from $\ell_{\infty}(\Gamma)$ to $X$.
\end{abstract}

\maketitle
\thispagestyle{empty}

\section{Introduction}

A result established by D. Tingley in 1987 proves that a surjective isometry $\Delta$ between the unit spheres of two finite dimensional Banach spaces preserves antipodal points, that is, $f(-x)=-f(x)$ for every $x$ in the unit sphere of the domain space (see \cite{Ting1987}). This contribution has served as stimulus and motivation to the growing interest on the so-called Tingley's problem which can be stated as follows: Let $X$ and $Y$ be Banach spaces whose unit spheres are denoted by $S(X)$ and $S(Y)$, respectively. Suppose $\Delta : S(X) \to S(Y)$ is a surjective isometry. Does $\Delta$ admits an extension to a surjective real linear isometry from $X$ onto $Y$?\smallskip

Tingley's problem remains open even in the case in which $X$ and $Y$ are $2$-dimensional Banach spaces. A considerable number of interesting results have shown that Tingley's problem admits a positive answer in ``classical'' Banach spaces like $\ell^p (\Gamma)$ spaces with $1\leq p\leq \infty$ (G.G. Ding \cite{Ding2002,Di:p,Di:8} and \cite{Di:1}), $L^{p}(\Omega, \Sigma, \mu)$ spaces, where $(\Omega, \Sigma, \mu)$ is a $\sigma$-finite measure space and $1\leq p\leq \infty$ (D. Tan \cite{Ta:8, Ta:1} and \cite{Ta:p}), $C_0(L)$ spaces (R.S. Wang \cite{Wang}), finite dimensional polyhedral Banach spaces (V. Kadets and M. Mart{\'i}n \cite{KadMar2012}), finite dimensional C$^*$-algebras and finite von Neumann algebras (R. Tanaka \cite{Tan2017b}), compact linear operators on a complex Hilbert space (A.M. Peralta and R. Tanaka \cite{PeTan16}), trace class operators (F.J. Fern{\'a}ndez-Polo, J.J. Garc{\'e}s, A.M. Peralta and I. Villanueva \cite{FerGarPeVill17}), bounded linear operators on a complex Hilbert space, weakly compact JB$^*$-triples and atomic JBW$^*$-triples (F.J. Fern{\'a}ndez-Polo, A.M. Peralta \cite{FerPe17, FerPe17b, FerPe17c}), and more recently von Neumann algebras (F.J. Fern{\'a}ndez-Polo, A.M. Peralta \cite{FerPe17d}), among others.\smallskip

A Banach space $X$ satisfies the Mazur-Ulam property if for every Banach space $Y$, Tingley's problem admits a positive solution for every surjective isometry $\Delta : S(X) \to S(Y)$.\smallskip

Let $\Gamma$ be an infinite set (equipped with the discrete topology). Accordingly to the standard notation, $c_0(\Gamma)$ will denote the space of all functions $x:\Gamma \to \mathbb{C}$ such that, for all $\varepsilon >0$, the set $\{ n\in \Gamma : |x(n)|\geq \varepsilon \}$ is finite. We consider $c_0(\Gamma)$ as a Banach space equipped with the supremum norm. In a recent contribution, we prove that the space $c_0(\Gamma)$ satisfies the \emph{Mazur-Ulam property}, that is, every surjective isometry from the unit sphere of $c_0(\Gamma)$ to the unit sphere of an arbitrary complex Banach space $X$ admits a unique extension to a surjective real linear isometry from $c_0(\Gamma)$ onto $X$ (see \cite{JVMorPeRa2017}).\smallskip

The main result in \cite{JVMorPeRa2017} provides what is probably the first example of a complex Banach space satisfying the Mazur-Ulam property. Prior contributions showed that the spaces $c_0(\Gamma,\mathbb{R}),$ of real null sequences, and $\ell_{\infty}(\Gamma,\mathbb{R}),$ of all bounded real-valued functions on a discrete set $\Gamma$, satisfy the Mazur-Ulam property (see \cite[Corollary 2]{Ding07}, \cite[Main Theorem]{Liu2007}). Another examples of Banach spaces satisfying the Mazur-Ulam property are the space $C(K,\mathbb{R}),$ of all real-valued continuous functions on a compact Hausdorff space $K$ \cite[Corollary 6]{Liu2007}, and the spaces $L^{p}((\Omega, \Sigma, \mu), \mathbb{R})$ of real-valued measurable functions on an arbitrary $\sigma$-finite measure space $(\Omega, \Sigma, \mu)$ for all $1\leq p\leq \infty$ \cite{Ta:1,Ta:8,Ta:p}. Although, a surjective linear isometry between the unit spheres of two complex Banach spaces need not admit an extension to a complex linear surjective isometry between the spaces (consider, for example, the conjugation on $S(\mathbb{C})$), the recent contributions on Tingley's problem for (complex) sequence spaces and operator algebras (compare \cite{Tan2016, Tan2017, Tan2017b, FerGarPeVill17, FerPe17b, FerPe17d} and the recent reference \cite{JVMorPeRa2017}) show the interest and attractiveness of the study of the Mazur-Ulam property in the setting of complex Banach spaces. \smallskip

It is conjectured in \cite{JVMorPeRa2017} that the space $\ell_{\infty}(\Gamma)$ also satisfies the Mazur-Ulam property. However, the techniques in the just quoted paper are not enough to prove this affirmation. In this note we provide a proof for this conjecture, and we confirm that the space $\ell_{\infty}(\Gamma)$ satisfies the Mazur-Ulam property. The main result reads as follows:

\begin{theorem}\label{t ellinfty satifies the MU} Let $\Gamma$ be an infinite set. Then, the space $\ell_{\infty}(\Gamma)$ satisfies the Mazur-Ulam property, that is, for each Banach space $X$, every surjective isometry $\Delta: S(\ell_{\infty}(\Gamma)) \to S(X)$ admits a unique extension to a surjective real linear isometry from $\ell_{\infty}(\Gamma)$ onto $X$.
\end{theorem}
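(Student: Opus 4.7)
The plan is to combine the newly established Mazur--Ulam property of $c_0(\Gamma)$ from \cite{JVMorPeRa2017} with the rich facial structure of the unit ball of the abelian C*-algebra $\ell_\infty(\Gamma)=C(\beta\Gamma)$. As a first step, I would invoke Tingley's classical antipodal preservation to obtain $\Delta(-x)=-\Delta(x)$ for every $x\in S(\ell_\infty(\Gamma))$, which already yields $\|\Delta(x)+\Delta(y)\|=\|x+y\|$; this is the starting input for extracting the richer structural preservation from $\Delta$ needed later.

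The central geometric step is to prove that $\Delta$ sends each maximal norm-closed proper face of the unit ball of $\ell_\infty(\Gamma)$ onto a maximal norm-closed proper face of the unit ball of $X$. Using the Gelfand identification $\ell_\infty(\Gamma)=C(\beta\Gamma)$, these faces have the form $\mathfrak{F}_{\omega,\lambda}=\{x\in B_{\ell_\infty(\Gamma)}\;:\;\widehat{x}(\omega)=\lambda\}$ for $\omega\in\beta\Gamma$ and $\lambda\in\mathbb{T}$. I would follow the pattern established in the cited works of Tanaka, Fern\'andez-Polo and Peralta, namely to characterise the intersections $S(\ell_\infty(\Gamma))\cap \mathfrak{F}_{\omega,\lambda}$ through purely metric conditions on the sphere, which are then automatically preserved by any surjective isometry $\Delta$.

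Once face-preservation is available, I would restrict $\Delta$ to the part of $S(\ell_\infty(\Gamma))$ whose behaviour is controlled by the isolated points $\Gamma\subset\beta\Gamma$, and apply the Mazur--Ulam property of $c_0(\Gamma)$ from \cite{JVMorPeRa2017} to obtain a real-linear isometric partial extension $T_0$ defined on (a copy of) $c_0(\Gamma)$ inside $X$. The candidate extension $T$ to all of $\ell_\infty(\Gamma)$ would then be assembled from $T_0$ and the additional data encoded in the faces $\mathfrak{F}_{\omega,\lambda}$ with $\omega$ in the corona $\beta\Gamma\setminus\Gamma$; uniqueness of such a $T$ will be automatic because a real-linear isometry is determined by its values on any total subset of the sphere.

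The main obstacle, and what distinguishes this problem from the $c_0(\Gamma)$ case, is precisely the corona $\beta\Gamma\setminus\Gamma$: elements of $S(\ell_\infty(\Gamma))$ that do not vanish at infinity are not norm-approximable by finitely supported elements, so $T_0$ cannot simply be passed to a norm limit. I expect this to be handled by exploiting the lattice and multiplicative structure of $\ell_\infty(\Gamma)$, in particular polar-type decompositions $x=u|x|$ with $u$ unimodular and decompositions $x=\sum_{i}\lambda_i\chi_{A_i}$ with orthogonal supports, together with face-preservation, in order to reduce a general sphere element to controlled pieces on which the $c_0$-based extension already acts linearly; additivity of the resulting map on enough pairs then forces the desired real-linearity of $T$ on all of $\ell_\infty(\Gamma)$.
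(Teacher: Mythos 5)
Your proposal has the right intuition in its final paragraph (decompose sphere elements into unimodular pieces with disjoint supports and use norm density), but the route you sketch to get there contains genuine gaps. First, Tingley's 1987 antipodal-preservation theorem is a statement about \emph{finite-dimensional} spaces only; it cannot be invoked for $S(\ell_\infty(\Gamma))$. In the paper the identity $\Delta(-x)=-\Delta(x)$ is not an input but an output of the orthogonality machinery (it is only obtained for algebraic elements via Proposition \ref{new 3.7 with mo partial isometries} and then extended by density). Second, and more seriously, your plan to ``restrict $\Delta$ to the part controlled by the isolated points and apply the Mazur--Ulam property of $c_0(\Gamma)$'' does not type-check: the restriction of $\Delta$ to $S(c_0(\Gamma))\subset S(\ell_\infty(\Gamma))$ is an isometry onto some subset of $S(X)$, which need not be the unit sphere of any closed subspace of $X$, so the hypothesis of the $c_0(\Gamma)$ theorem is not satisfied and no partial real-linear extension $T_0$ is produced this way. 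Third, the facial analysis over the corona $\beta\Gamma\setminus\Gamma$ that you propose is never carried out in the paper and it is unclear how the metric characterisations of the faces $\mathfrak{F}_{\omega,\lambda}$ for non-isolated $\omega$ would be exploited; the paper deliberately avoids the corona altogether.

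What actually closes the argument is the content you defer to ``I expect this to be handled by\ldots'': one must prove that $\Delta$ maps every finite family of mutually orthogonal non-zero partial isometries $v_1,\ldots,v_k$ (unimodular functions on disjoint subsets of $\Gamma$) to a completely $M$-orthogonal set and satisfies $\sum_j\alpha_j\Delta(v_j)=\Delta\bigl(\sum_j\alpha_j\odot v_j\bigr)$, where $\odot$ is a coordinatewise product twisted by the maps $\sigma_n$ recording whether $\Delta$ acts linearly or conjugate-linearly on the $n$th coordinate. This conjugation ambiguity (already visible on $S(\mathbb{C})$) is absent from your proposal, and without it no complex-scalar bookkeeping of the form $x=\sum_i\lambda_i\chi_{A_i}$ can be made consistent. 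Once these identities are in place, the homogeneous extension $F(x)=\|x\|\,\Delta(x/\|x\|)$ is additive on algebraic elements, is Lipschitz, and algebraic elements are norm dense in $\ell_\infty(\Gamma)$, which yields real linearity; this is the argument your sketch is missing.
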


The strategy in this note is to improve the geometric study developed in \cite{JVMorPeRa2017} to establish that $c_0(\Gamma)$ satisfies the Mazur-Ulam property. Here, instead of considering minimal projections in $c_0(\Gamma)$, we consider general non-zero partial isometries in $\ell_{\infty}(\Gamma)$. We shall show that a surjective isometry $\Delta: S(\ell_{\infty}(\Gamma)) \to S(X)$ maps every finite family of mutually orthogonal non-zero partial isometries in $\ell_{\infty}(\Gamma)$ to a completely $M$-orthogonal set in $X$ (Proposition \ref{new 3.7 with mo partial isometries}). This geometric argument is combined with the fact that we can define a real linear product $\odot$ on $\ell_{\infty}(\Gamma)$ satisfying $\lambda \Delta (v) = \Delta (\lambda\odot v)$, for every non-zero partial isometry $v\in \ell_{\infty}(\Gamma)$ and every $\lambda \in \mathbb{C}$ with $|\lambda|=1$ (see Lemma \ref{l takes out scalar with the new multiplication on projections}), to show a real linear behavior of the homogeneous extension of $\Delta$ on algebraic elements. The norm density of algebraic elements provides the final argument. The geometric arguments in this note are completely new compared with those in \cite{JVMorPeRa2017}. In striving for conciseness, we shall base our starting point on some of the ideas and arguments developed in \cite{JVMorPeRa2017,Liu2007} and \cite{Di:8}.

\section{Proof of the main theorem}

We begin gathering some results established in \cite{JVMorPeRa2017}. Henceforth, for $k$ in $\Gamma$, the symbol $e_n$ will denote the function in $\ell_{\infty}(\Gamma)$ satisfying $e_n(n) =1$ and $e_n(k) = 0,$ for all $k\in \Gamma$ with $n\neq k$. Given $n\in \Gamma$ and $\lambda\in \mathbb{T},$ we set $$A(n,\lambda):=\{ x\in S(\ell_{\infty} (\Gamma)) : x(n) = \lambda\}.$$ It is known that $A(n,\lambda)$ is a maximal norm closed face of the closed unit ball of $\ell_{\infty}$.\smallskip

Henceforth, the closed unit ball of a Banach space $X$ will be denoted by $\mathcal{B}_{X}$.\smallskip

The next lemma was established in \cite{JVMorPeRa2017}.

\begin{lemma}\label{l support}\cite[Lemma 2.1 and Proposition 3.4]{JVMorPeRa2017} Let $\Gamma$ be an infinite set, let $X$ be a Banach space, and let $\Delta : S(\ell_{\infty} (\Gamma))\to S(X)$ be a surjective isometry. Then, for each $n\in \Gamma$ and each $\lambda\in \mathbb{T},$ the set $$\hbox{\rm supp}(n,\lambda) := \{\varphi\in X^* : \|\varphi\|=1,\hbox{ and } \varphi^{-1} (\{1\})\cap S(X) = \Delta(A(n,\lambda)) \}$$ is a non-empty weak$^*$-closed face of $\mathcal{B}_{\ell_{\infty} (\Gamma)}$.\smallskip

Furthermore, if $n_0$ is an element in $\Gamma$ and $\varphi$ is an element in $\hbox{\rm supp}(n_0,\lambda)$ with $\lambda\in \mathbb{T}$, then $\varphi\Delta (x) =0$ for every $x\in S(\ell_{\infty} (\Gamma))$ with $x(n_0)=0$.$\hfill\Box$
\end{lemma}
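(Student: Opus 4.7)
The plan divides into two parts, corresponding to the structural claim and the annihilation claim. For the structural part, I would first identify $A(n,\lambda)$ with the maximal proper norm-closed face of $\mathcal{B}_{\ell_{\infty}(\Gamma)}$ supported by the norm-one functional $\overline{\lambda}\,e_n^{*}$, that is, $A(n,\lambda)=\{x\in\mathcal{B}_{\ell_{\infty}(\Gamma)}:\overline{\lambda}\,e_n^{*}(x)=1\}$. I would then invoke the standard preservation principle at the heart of the Tingley-problem literature: a surjective isometry between unit spheres of Banach spaces carries maximal convex subsets of the source sphere to maximal convex subsets of the target sphere. Applied to $\Delta$, this shows $\Delta(A(n,\lambda))$ is a maximal proper (norm-closed) face of $\mathcal{B}_{X}$. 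Hahn-Banach separation produces a norm-one functional $\varphi\in X^{*}$ with $\varphi\equiv 1$ on $\Delta(A(n,\lambda))$, establishing non-emptiness of $\mathrm{supp}(n,\lambda)$; the maximality of $\Delta(A(n,\lambda))$ forces the equality $\varphi^{-1}(\{1\})\cap S(X)=\Delta(A(n,\lambda))$, because any strictly larger convex subset of $S(X)$ would contradict maximality. This gives the alternative characterization $\mathrm{supp}(n,\lambda)=\{\varphi\in\mathcal{B}_{X^{*}}:\varphi|_{\Delta(A(n,\lambda))}\equiv 1\}$, which is manifestly weak$^{*}$-closed (cut out of $\mathcal{B}_{X^{*}}$ by weak$^{*}$-continuous evaluations) and is a face of $\mathcal{B}_{X^{*}}$ by the routine extremality argument: if $\varphi=t\psi_1+(1-t)\psi_2$ in $\mathcal{B}_{X^{*}}$ with $t\in(0,1)$ and $\varphi(y)=1$ for $y\in\Delta(A(n,\lambda))$, then $|\psi_i(y)|\leq 1$ forces $\psi_1(y)=\psi_2(y)=1$.

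For the annihilation part, fix $x\in S(\ell_{\infty}(\Gamma))$ with $x(n_0)=0$ and $\varphi\in\mathrm{supp}(n_0,\lambda)$. Since $\|x\|_\infty=1$ and $x(n_0)=0$, the element $y:=\lambda e_{n_0}+x$ has $n_0$-entry $\lambda$ and $\|y\|_\infty=1$, hence $y\in A(n_0,\lambda)$ and $\|x-y\|_\infty=\|\lambda e_{n_0}\|_\infty=1$. The isometric property of $\Delta$ together with $\varphi(\Delta(y))=1$ yields $|1-\varphi(\Delta(x))|\leq 1$. Repeating the computation with $-x$ in place of $x$ (legitimate since $(-x)(n_0)=0$ and $\|{-x}\|_\infty=1$) gives $|1-\varphi(\Delta(-x))|\leq 1$. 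To convert this into a bound on $\varphi(\Delta(x))$ itself, I would invoke preservation of antipodes, $\Delta(-x)=-\Delta(x)$, which is available for $\ell_{\infty}(\Gamma)$ from the rich face structure of its unit ball. Granted this, the second inequality becomes $|1+\varphi(\Delta(x))|\leq 1$; summing the squared forms $|1\pm\varphi(\Delta(x))|^{2}\leq 1$ yields $2+2|\varphi(\Delta(x))|^{2}\leq 2$, forcing $\varphi(\Delta(x))=0$.

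The main obstacle is the antipode-preservation step $\Delta(-x)=-\Delta(x)$, which in full generality remains Tingley's open conjecture. For $\ell_{\infty}(\Gamma)$ it can be extracted from the pairing between maximal proper faces of $\mathcal{B}_{\ell_{\infty}(\Gamma)}$ and their antipodes, together with the fact that $\Delta$ respects the diameter-$2$ antipodal structure; this geometric labor is nontrivial and is where the bulk of the work in \cite{JVMorPeRa2017} sits. A secondary delicate point is the preservation of maximal convex subsets under $\Delta$ used in Part I: while this is standard in the Tingley-type literature (going back to Cheng-Dong-style arguments, and underlying many of the works of Ding and of Fern\'andez-Polo--Peralta cited in the introduction), it ultimately rests on a characterization of such maximal subsets through pairs of points at distance $2$, which is the true geometric content of the preservation principle.
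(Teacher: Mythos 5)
Your first part is essentially correct and coincides with the argument behind the paper's citation (the paper itself offers no proof of this lemma; it imports it from \cite[Lemma 2.1 and Proposition 3.4]{JVMorPeRa2017}): the Cheng--Dong/Tanaka principle that a surjective isometry between unit spheres carries maximal convex subsets to maximal convex subsets, an Eidelheit/Hahn--Banach separation to produce a norm-one support functional, maximality of $\Delta(A(n,\lambda))$ to force $\varphi^{-1}(\{1\})\cap S(X)=\Delta(A(n,\lambda))$, and the routine verifications that $\mathrm{supp}(n,\lambda)$ is a weak$^*$-closed face. You also implicitly correct the statement's typo: the face lives in $\mathcal{B}_{X^*}$, not in $\mathcal{B}_{\ell_{\infty}(\Gamma)}$. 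Your closing arithmetic, $|1-z|\leq 1$ and $|1+z|\leq 1$ forcing $z=0$, is fine.

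The genuine gap is in your second part: you invoke pointwise antipodality $\Delta(-x)=-\Delta(x)$ for all $x\in S(\ell_{\infty}(\Gamma))$. This is not available at this stage of the theory. Tingley's 1987 result is finite-dimensional, and for $\Delta:S(\ell_{\infty}(\Gamma))\to S(X)$ pointwise antipodality is, in effect, a by-product of the very Mazur--Ulam theorem the lemma serves to prove; \cite{JVMorPeRa2017} never establishes it as a standalone tool, so ``extracting'' it from that paper is circular. What is actually used, and all you need, is the much weaker \emph{facial} antipodality $\Delta(-C)=-\Delta(C)$ for $C$ a maximal convex subset of the sphere (see \cite[Lemma 2.1]{FerPe17b}, invoked in \cite{JVMorPeRa2017}), and for $\ell_{\infty}(\Gamma)$ it has a short self-contained proof: one checks directly that $\{w\in S(\ell_{\infty}(\Gamma)): \|w-c\|=2 \ \forall c\in A(n,\lambda)\}=A(n,-\lambda)=-A(n,\lambda)$ (given $w$ with $w(n)\neq-\lambda$, the element $c$ with $c(n)=\lambda$ and $c(k)=w(k)$ for $k\neq n$ lies in $A(n,\lambda)$ and $\|w-c\|<2$); since $\Delta(A(n,\lambda))$ is convex, $\|z+z'\|=2$ for $z,z'\in\Delta(A(n,\lambda))$, so applying $\Delta$ to this purely metric characterization, first for $\lambda$ and then for $-\lambda$, yields $\Delta(A(n,-\lambda))=-\Delta(A(n,\lambda))$. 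With this lemma in hand, take $w:=-\lambda e_{n_0}+x\in A(n_0,-\lambda)$; then $\varphi\Delta(w)=-1$ and $\|x-w\|=1$, which gives $|1+\varphi\Delta(x)|\leq 1$ directly, and your computation closes the proof. In short: the skeleton and the endgame are right, but the antipodal step must be routed through facial antipodality of the maximal faces $A(n_0,\pm\lambda)$ rather than through the unproved pointwise identity $\Delta(-x)=-\Delta(x)$.
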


The next lemma is a straight consequence of Lemma \ref{l support}. We recall that an element $v$ in $\ell_{\infty}(\Gamma)$ is a partial isometry if and only if $v v^*$ is a projection (i.e. $|v(k)| (1-|v(k)|)=0,$ for all $k\in \Gamma$).

\begin{lemma}\label{l supp(n, lambda) separates points} Let $\Gamma$ be an infinite set, let $X$ be a complex Banach space, and let $\Delta: S(\ell_{\infty}(\Gamma)) \to S(X)$ be a surjective isometry. Let $v$ be a non-zero partial isometry in $\ell_{\infty}(\Gamma)$, and let $n$ be an element in $\Gamma$. Suppose that $\varphi \Delta(v) =0$ for every $\varphi\in \hbox{supp}(n,\lambda),$ and every $\lambda\in \mathbb{T}$. Then $v(n)=0$.$\hfill\Box$.
\end{lemma}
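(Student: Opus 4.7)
The plan is to run a short contrapositive argument using directly Lemma \ref{l support}. Assume, for contradiction, that $v(n)\neq 0$. By the characterization of partial isometries in $\ell_{\infty}(\Gamma)$ recalled just before the statement, $|v(k)|\in\{0,1\}$ for every $k\in\Gamma$, so $\lambda_0:=v(n)$ belongs to $\mathbb{T}$. Hence $v\in A(n,\lambda_0)$.

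Next I would invoke the first part of Lemma \ref{l support} to pick an element $\varphi \in \hbox{\rm supp}(n,\lambda_0)$, which exists since the set is nonempty. By the very definition of $\hbox{\rm supp}(n,\lambda_0)$, the equality $\varphi^{-1}(\{1\})\cap S(X)=\Delta(A(n,\lambda_0))$ holds, and since $v\in A(n,\lambda_0)$, the element $\Delta(v)$ lies in $\varphi^{-1}(\{1\})\cap S(X)$. Therefore $\varphi\Delta(v)=1\neq 0$, which contradicts the hypothesis that $\varphi\Delta(v)=0$ for every $\varphi$ in every $\hbox{\rm supp}(n,\lambda)$ with $\lambda\in\mathbb{T}$. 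This forces $v(n)=0$.

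There is no real obstacle here: the statement is essentially a restatement of Lemma \ref{l support} under the additional information that $v$ is a partial isometry, which guarantees that $v(n)\neq 0$ implies $v(n)\in\mathbb{T}$ so that the face $A(n,v(n))$ is actually one of those covered by Lemma \ref{l support}. The only small care needed is in making sure to quote the \emph{first} (existence) part of Lemma \ref{l support}, not the second part about vanishing, so that a functional $\varphi$ with $\varphi\Delta(v)=1$ is actually produced.
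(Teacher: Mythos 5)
Your argument is correct and is precisely the ``straight consequence'' of Lemma \ref{l support} that the paper has in mind (the paper omits the proof entirely): a non-zero partial isometry with $v(n)\neq 0$ has $v(n)\in\mathbb{T}$ and lies in $S(\ell_{\infty}(\Gamma))$, hence in the face $A(n,v(n))$, so any $\varphi\in\hbox{\rm supp}(n,v(n))$ (nonempty by Lemma \ref{l support}) gives $\varphi\Delta(v)=1$, contradicting the hypothesis.
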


Let $k$ be an entire number bigger than or equal to 2. Following the notation in \cite{JVMorPeRa2017}, we shall say that a set $\{x_1,\ldots,x_k\}$ in a Banach space $X$ is \emph{completely $M$-orthogonal} if $$\Big\|\sum_{j=1}^k \alpha_j x_j \Big\| = \max\{ \|\alpha_j x_j\| : 1\leq j\leq k\},$$ for every $\alpha_1,\ldots,\alpha_k$ in $\mathbb{C}$. It is known that a subset $\{x_1,\ldots,x_k\}$ in the unit sphere of a complex normed space $X$ is completely $M$-orthogonal if and only if the equality $$\displaystyle \left\|\sum_{j=1}^k \lambda_j x_j \right\| = 1$$ holds for every $\lambda_1,\ldots,\lambda_k$ in $\mathbb{T}=\{\lambda\in \mathbb{C}: |\lambda| =1\}$ (see \cite[Lemma 3.4]{JVMorPeRa2017}). Actually the following subtle variant of the last statement is required for later purposes.

\begin{lemma}\label{l new charact of CM orthogo} Suppose $k\in \mathbb{N}$ with $k\geq 2$. Let $\{x_1,\ldots,x_k\}$ be a subset of the unit sphere of a complex normed space $X$. Then $\{x_1,\ldots,x_k\}$ is completely $M$-orthogonal if and only if the equality $$\displaystyle \left\|\sum_{j=1}^k \lambda_j x_j \right\| = 1$$ holds for $\lambda_1=1$ and every $\lambda_2,\ldots,\lambda_k$ in $\mathbb{T}=\{\lambda\in \mathbb{C}: |\lambda| =1\}$.
\end{lemma}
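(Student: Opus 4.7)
The \emph{only if} direction is immediate from the prior characterization cited from \cite[Lemma 3.4]{JVMorPeRa2017}, since if $\{x_1,\ldots,x_k\}$ is completely $M$-orthogonal then $\|\sum_j \lambda_j x_j\| = \max_j\|\lambda_j x_j\| = 1$ for any unimodular scalars, and in particular when $\lambda_1 = 1$.

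For the \emph{if} direction, my plan is to reduce to the hypothesis of \cite[Lemma 3.4]{JVMorPeRa2017} by a simple unimodular rotation. Concretely, given arbitrary $\mu_1,\ldots,\mu_k \in \mathbb{T}$, I would factor out $\mu_1$ and write
\[
\Big\|\sum_{j=1}^k \mu_j x_j\Big\| = |\mu_1|\,\Big\|\sum_{j=1}^k \mu_j x_j\Big\| = \Big\|\overline{\mu_1}\sum_{j=1}^k \mu_j x_j\Big\| = \Big\|x_1 + \sum_{j=2}^k \overline{\mu_1}\mu_j\, x_j\Big\|.
\]
Setting $\lambda_1 = 1$ and $\lambda_j := \overline{\mu_1}\mu_j \in \mathbb{T}$ for $j\geq 2$, the hypothesis applies and gives $\|\sum_j \mu_j x_j\| = 1$. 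Hence $\|\sum_{j=1}^k \lambda_j x_j\|=1$ for \emph{all} $\lambda_1,\ldots,\lambda_k\in\mathbb{T}$, and the quoted \cite[Lemma 3.4]{JVMorPeRa2017} yields complete $M$-orthogonality of $\{x_1,\ldots,x_k\}$.

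There is essentially no obstacle here; the lemma is a notationally convenient strengthening whose content is pure homogeneity of the norm under multiplication by an element of $\mathbb{T}$. The only point that deserves a brief mention is that the set of admissible tuples $(\overline{\mu_1}\mu_j)_{j\geq 2}$ ranges over all of $\mathbb{T}^{k-1}$ as $(\mu_1,\ldots,\mu_k)$ ranges over $\mathbb{T}^k$, so the reduction really does recover the full statement of the cited lemma before invoking it.
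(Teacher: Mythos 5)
Your proof is correct and follows essentially the same route as the paper: both the ``only if'' direction via \cite[Lemma 3.4]{JVMorPeRa2017} and the ``if'' direction by factoring out the unimodular scalar $\mu_1$ (the paper writes $\lambda_j/\lambda_1$, which is the same as your $\overline{\mu_1}\mu_j$) and then invoking the cited lemma. Your closing remark that the rotated tuples exhaust $\mathbb{T}^{k-1}$ is a nice explicit touch, but the argument is the same.
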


\begin{proof} The ``only if'' implication follows from \cite[Lemma 3.4]{JVMorPeRa2017}. For the ``if'' implication, let us take $\lambda_1,\ldots,\lambda_k$ in $\mathbb{T}$. By the assumptions we have $$ \Big\|\sum_{j=1}^k \lambda_j x_j \Big\| = | \lambda_1| \Big\|x_1+\sum_{j=2}^k \frac{\lambda_j}{\lambda_1} x_j \Big\| = 1.$$ Under this conditions, Lemma 3.4 in \cite{JVMorPeRa2017} gives the desired statement.
\end{proof}

The next result has been essentially borrowed from \cite{JVMorPeRa2017}.

\begin{proposition}\label{p gathered properties}\cite[Propositions 3.3, 3.6, and 3.7]{JVMorPeRa2017} Let $\Gamma$ be an infinite set, let $X$ be a complex Banach space, and let $\Delta: S(\ell_{\infty}(\Gamma)) \to S(X)$ be a surjective isometry. Then the following statements hold:\begin{enumerate}[$(a)$]\item For each $n\in \Gamma$ and each $\lambda\in \mathbb{T}$ we have $\Delta (\lambda e_n) \in \{\lambda \Delta (e_n), \overline{\lambda} \Delta (e_n)\}$;
\item If $\Delta (\lambda e_n) =\lambda \Delta (e_n)$ {\rm(}respectively, $\Delta (\lambda e_n) =\overline{\lambda} \Delta (e_n)${\rm)} for some $\lambda\in \mathbb{T}\backslash\mathbb{R}$, then $\Delta (\mu e_n) =\mu \Delta (e_n)$ {\rm(}respectively $\Delta (\mu e_n) =\overline{\mu} \Delta (e_n)${\rm)} for all $\mu\in \mathbb{T}$, in this case we define $\sigma_n : \mathbb{C}\to\mathbb{C},$ $\sigma_n (\alpha):=\alpha$ {\rm(}respectively, $\sigma_n (\alpha):=\overline{\alpha}${\rm)} for all $\alpha\in \mathbb{C}$;
\item Let $n_1,\ldots,n_k$ be different elements in $\Gamma$. Then the set $\{\Delta(e_{n_1}),\ldots,\Delta( e_{n_{k}})\}$ is completely $M$-orthogonal, and the identity $$\sum_{j=1}^{k} \sigma_{n_j}(\alpha_{n_j}) \Delta( e_{n_j} ) = \sum_{j=1}^{k}  \Delta(\alpha_j e_{n_j} ) = \Delta \left(\sum_{j=1}^k \alpha_j e_{n_j} \right)$$ holds for every $\alpha_1,\ldots,\alpha_k\in \mathbb{C}\backslash\{0\}$ with $\max\{|\alpha_1|,\ldots,|\alpha_k|\}=1$. $\hfill\Box$
\end{enumerate}
\end{proposition}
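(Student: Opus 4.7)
My plan is to prove (a), (b), and (c) in order, with (a) being the geometric core. The main tools are Lemma~\ref{l support}, the antipodal-preservation property $\Delta(-x)=-\Delta(x)$ (already established for $\ell_\infty(\Gamma)$), and the isometry condition on $\Delta$. Throughout, for fixed $n \in \Gamma$ I study the ``circle'' $\{\lambda e_n : \lambda \in \mathbb{T}\}$ and its image $\{y_\lambda := \Delta(\lambda e_n) : \lambda \in \mathbb{T}\}$, which is an isometric copy of $\mathbb{T}$ lying on $S(X)$.

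For (a), fix $\lambda \in \mathbb{T}$ and pick $\varphi \in \hbox{supp}(n,1)$; then $\varphi(\Delta(e_n))=1$ and, by Lemma~\ref{l support}, $\varphi(\Delta(x))=0$ whenever $x(n)=0$. The isometry constraints $\|y_\lambda - y_1\| = |\lambda - 1|$ and $\|y_\lambda + y_1\| = |\lambda+1|$ (using $y_{-1}=-y_1$) tightly control $\varphi(y_\lambda)$. Combining these with the same procedure applied to functionals in $\hbox{supp}(n,\mu)$ for other $\mu \in \mathbb{T}$, and invoking the separation statement in Lemma~\ref{l supp(n, lambda) separates points}, I would conclude that $y_\lambda = \alpha \Delta(e_n)$ for some $\alpha\in\mathbb{T}$. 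The two norm equalities $|\alpha - 1| = |\lambda - 1|$ and $|\alpha+1|=|\lambda+1|$ then force $\alpha \in \{\lambda, \bar\lambda\}$. Part (b) follows by rigidity: the resulting map $\sigma_n : \mathbb{T} \to \mathbb{T}$ is an isometry fixing $1$ with values in $\{\lambda, \bar\lambda\}$; a single non-real point at which $\sigma_n(\lambda)=\lambda$ (respectively $\bar\lambda$) forces $\sigma_n$ to be the identity (resp.\ conjugation) on all of $\mathbb{T}$.

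For (c), let $u := \sum_{j=1}^{k} \alpha_j e_{n_j}$ with $\max_j |\alpha_j|=1$. Whenever $|\alpha_j|=1$, the element $u$ lies in $A(n_j,\alpha_j)$, so $\varphi_j(\Delta(u))=1$ for every $\varphi_j \in \hbox{supp}(n_j,\alpha_j)$. I would form the candidate $w := \sum_{j=1}^{k} \sigma_{n_j}(\alpha_j)\Delta(e_{n_j})$; using (a)--(b) and the positively homogeneous extension of $\Delta$, one checks that $w$ satisfies the same face-membership conditions. The functionals provided by Lemma~\ref{l support} distinguish the coordinates, so Lemma~\ref{l supp(n, lambda) separates points} applied to $\Delta(u)-w$ forces $\Delta(u)=w$. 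Specialising to $\alpha_1=1$ and $\alpha_j=\lambda_j\in\mathbb{T}$ for $j\ge 2$, Lemma~\ref{l new charact of CM orthogo} then upgrades this equality to the complete $M$-orthogonality of $\{\Delta(e_{n_1}), \ldots, \Delta(e_{n_k})\}$.

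The main obstacle is the opening move in (a): transferring the one-dimensional complex structure of $\mathbb{C}\cdot e_n$ through the non-linear $\Delta$, that is, ruling out a component of $y_\lambda$ outside $\mathbb{C}\cdot\Delta(e_n)$. Without linearity one has no direct access to an ``orthogonal complement'' of $\Delta(e_n)$ in $X$, so the argument must exploit the full family $\{\hbox{supp}(n,\lambda)\}_{\lambda\in\mathbb{T}}$ simultaneously. Lemma~\ref{l support} captures precisely this rigidity, encoding that these supporting functionals recover the $n$-th coordinate of $x$ from $\Delta(x)$.
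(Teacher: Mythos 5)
The paper offers no proof of this proposition: it is imported verbatim from \cite[Propositions 3.3, 3.6 and 3.7]{JVMorPeRa2017} and closed with a $\Box$, so the only ``proof'' here is the citation. Judged on its own merits, your sketch assembles the right ingredients (antipodality, the isometry identities $\|\Delta(\lambda e_n)\pm\Delta(\mu e_n)\|=|\lambda\pm\mu|$, and the supporting functionals of Lemma~\ref{l support}), and parts of it are sound: once one knows $\Delta(\lambda e_n)=\alpha\Delta(e_n)$ with $\alpha\in\mathbb{T}$, the two circle equations $|\alpha-1|=|\lambda-1|$, $|\alpha+1|=|\lambda+1|$ do force $\alpha\in\{\lambda,\overline{\lambda}\}$, and your rigidity argument for (b) (a non-real fixed point of an isometry of $\mathbb{T}$ taking values in $\{\mu,\overline{\mu}\}$ forces the identity) is correct.

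The genuine gap is exactly the step you flag as ``the main obstacle'' and then do not close: showing that $\Delta(\lambda e_n)$ lies in $\mathbb{C}\cdot\Delta(e_n)$ at all. The norm identities alone do not give this (in a Hilbert space, $\|y\pm x\|=\sqrt{2}$ with $\|x\|=\|y\|=1$ leaves a whole sphere of candidates for $y$), and the tool you invoke to finish, Lemma~\ref{l supp(n, lambda) separates points}, does not apply where you use it: that lemma takes as input an element $v$ of the \emph{domain} which is a partial isometry and tests $\Delta(v)$ against the supports; it says nothing about an arbitrary vector of $X$ such as $y_\lambda-\alpha\Delta(e_n)$ or $\Delta(u)-w$, which are not of the form $\Delta(v)$ for any partial isometry $v$. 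The same misapplication undermines (c): membership of both $\Delta(u)$ and $w$ in the faces $\varphi_j^{-1}(\{1\})\cap S(X)$ for the indices with $|\alpha_j|=1$ is far from forcing $\Delta(u)=w$ (these faces are large, and the coordinates with $|\alpha_j|<1$ impose no face condition whatsoever). The way such identifications are actually carried out --- see the paper's own Propositions~\ref{p projection an finitely many minimal projections are cm orthog} and \ref{new 3.7 with mo partial isometries}, which follow the scheme of \cite{JVMorPeRa2017} --- is to first establish complete $M$-orthogonality via Lemma~\ref{l new charact of CM orthogo} so that $w\in S(X)$, then use surjectivity to write $w=\Delta(x)$ for some $x$ in the \emph{domain}, and finally pin down every coordinate of $x$ by the $\pm$ trick ($|x(m)\pm 1|\le 1\Rightarrow x(m)=0$) together with the $1\pm|\alpha|$ computation for the sub-unimodular coefficients. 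Your sketch reverses this order (identity first, $M$-orthogonality last) and never performs the coordinate-by-coordinate identification, so as written the argument does not go through.
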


Let $F$ be a finite subset of $\Gamma$. Let us take a subset $\{\lambda_j : j\in F\}$ in $\mathbb{T}$. It follows from the above proposition that $$\left\| \sum_{j\in F} \lambda_j \Delta( e_{j} ) \right\| = \left\|\Delta \left(\sum_{j\in F} \sigma_j(\lambda_j) e_{j} \right)  \right\| = \left\|\sum_{j\in F} \sigma_j(\lambda_j) e_{j}  \right\| =1. $$ This can be applied to conclude that, in the case $\Gamma=\mathbb{N}$ the series $\displaystyle \sum_{n\geq 1} \Delta( e_{n} )$ is weakly unconditionally Cauchy. We recall that a series $\displaystyle\sum_{n\geq 1} x_n$ in a Banach space $X$ is called \emph{weakly unconditionally
Cauchy (w.u.C.)} if there exists $C>0$ such that for any finite subset
$F\subset{\mathbb{N}}$ and $\varepsilon_{n} \in  \mathbb{T}$ we have $\displaystyle \left\| \sum_{n\in F}
\varepsilon_{n} x_{n} \right\| \leq C,$ equivalently, for each $\varphi\in X^*$ the series $\displaystyle \sum_{n\geq 1} |\varphi(x_n)|$ converges (see \cite[Theorem 6 in page 44]{Die}). However, being w.u.C. is not enough to conclude that the series $\displaystyle \sum_{n\geq 1} \Delta( e_{n} )$ converges to an element of $X$ in an appropriate topology (we can consider, for example the canonical basis in $X=c_0$).\smallskip

Elements $a,b\in \ell_{\infty}(\Gamma)$ are said to be orthogonal (written $a\perp b$) if $a b =0$.\smallskip

Our next result widens our knowledge on the image of two orthogonal elements in $S(\ell_{\infty}(\Gamma))$ under a surjective isometry onto the unit sphere of another Banach space.

\begin{proposition}\label{p projection an finitely many minimal projections are cm orthog} In the hypothesis of Proposition \ref{p gathered properties}, let $v$ be a partial isometry in $\ell_{\infty}(\Gamma)$. Suppose that ${n_1},\ldots, {n_k}$ are different elements in $\Gamma$ such that $v$ is orthogonal to $e_{n_1},\ldots, e_{n_k}$. Then the set $\{\Delta(v), \Delta(e_{n_1}),\ldots, \Delta(e_{n_k}) \}$ is completely $M$-orthogonal. Furthermore, given $\alpha_1,\ldots,\alpha_k\in \mathbb{C}\backslash\{0\}$ with $\max\{|\alpha_1|,\ldots,|\alpha_k|\}= 1$, we have \begin{equation}\label{eq formula for a pi and finitely many minimal pi} \Delta (v)+\sum_{j=1}^{k} \sigma_{n_j} (\alpha_j) \Delta( e_{n_j} ) =\Delta (v)+\Delta\left(\sum_{j=1}^{k}  \alpha_j e_{n_j}\right) = \Delta \left(v+\sum_{j=1}^k \alpha_j e_{n_j} \right).
\end{equation}
\end{proposition}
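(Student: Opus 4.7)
My plan is to establish the identity \eqref{eq formula for a pi and finitely many minimal pi} first; the complete $M$-orthogonality of $\{\Delta(v),\Delta(e_{n_1}),\ldots,\Delta(e_{n_k})\}$ then follows at once from Lemma \ref{l new charact of CM orthogo}. Indeed, for arbitrary $\mu_1,\ldots,\mu_k\in\mathbb{T}$, the choice $\alpha_j:=\sigma_{n_j}(\mu_j)$ in \eqref{eq formula for a pi and finitely many minimal pi} (noting that $\sigma_{n_j}$ is its own inverse, so that $\sigma_{n_j}(\alpha_j)=\mu_j$) yields
\[
\Big\|\Delta(v)+\sum_{j=1}^k\mu_j\Delta(e_{n_j})\Big\|=\Big\|\Delta\Big(v+\sum_{j=1}^k\sigma_{n_j}(\mu_j)e_{n_j}\Big)\Big\|=1,
\]
and Lemma \ref{l new charact of CM orthogo} concludes.

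To prove the identity, set $w:=\sum_{j=1}^k\alpha_je_{n_j}$, $u:=v+w$ (both of norm one in $\ell_{\infty}(\Gamma)$, since $v$ is a nonzero partial isometry orthogonal to each $e_{n_j}$ and $\max|\alpha_j|=1$), and $y:=\Delta(v)+\Delta(w)$. Proposition \ref{p gathered properties}(c) gives $\Delta(w)=\sum_j\sigma_{n_j}(\alpha_j)\Delta(e_{n_j})$, so the substantive content of \eqref{eq formula for a pi and finitely many minimal pi} is the additivity relation $\Delta(u)=y$. The first step is to use Lemma \ref{l support} to verify the functional identity $\varphi(y)=\varphi(\Delta(u))$ on every support set $\hbox{\rm supp}(n,\lambda)$: for $m\in\hbox{\rm supp}(v)$ and $\varphi\in\hbox{\rm supp}(m,v(m))$, the inclusions $u,v\in A(m,v(m))$ give $\varphi(\Delta(u))=\varphi(\Delta(v))=1$ while $w(m)=0$ forces $\varphi(\Delta(w))=0$; for an index $i$ with $|\alpha_i|=1$ and $\psi\in\hbox{\rm supp}(n_i,\alpha_i)$, the inclusions $u,w\in A(n_i,\alpha_i)$ give $\psi(\Delta(u))=\psi(\Delta(w))=1$ while $v(n_i)=0$ forces $\psi(\Delta(v))=0$; and if $n\notin\hbox{\rm supp}(u)$ then $u(n)=v(n)=w(n)=0$, so every $\varphi\in\hbox{\rm supp}(n,\lambda)$ annihilates each of $\Delta(u),\Delta(v),\Delta(w)$.

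The main obstacle is to upgrade this pointwise agreement on support functionals to the vectorial equality $\Delta(u)=y$ in $X$. The plan is to combine the functional identities above with the isometric property $\|\Delta(u)-\Delta(x)\|=\|u-x\|$ for $x\in S(\ell_{\infty}(\Gamma))$, together with the surjectivity of $\Delta$, in order to show that $\|y-\Delta(x)\|=\|u-x\|$ for every such $x$; setting $x=u$ would then force $y=\Delta(u)$. The correspondence (via Lemma \ref{l support}) between the maximal faces of $\mathcal{B}_{\ell_{\infty}(\Gamma)}$ and those of $\mathcal{B}_X$, together with Lemma \ref{l supp(n, lambda) separates points} as a separation tool, is the main geometric input. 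The essential new difficulty compared with the analogous result in \cite{JVMorPeRa2017}, where the role of $v$ was played by a finitely supported minimal projection, is that here $v$ may have infinite support, so no reduction to a finite-dimensional subproblem is available and the argument must be carried out by genuinely geometric means in the maximal faces $A(n,\lambda)$.
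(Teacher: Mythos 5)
Your proposal has a genuine gap at exactly the point you yourself flag as ``the main obstacle'': the passage from agreement of $\Delta(u)$ and $y=\Delta(v)+\Delta(w)$ on the support functionals to the vector equality $\Delta(u)=y$ in $X$ is only announced as a plan (``show that $\|y-\Delta(x)\|=\|u-x\|$ for every $x$'') and is never carried out. This is where essentially all of the work lies, and the route you sketch does not obviously close: the functionals in $\bigcup_{n,\lambda}\hbox{supp}(n,\lambda)$ are not known to separate points of $X$ (Lemma \ref{l supp(n, lambda) separates points} only separates elements of the form $\Delta(v)$ with $v$ a partial isometry, and $\Delta(u)-y$ is not of that form), and no mechanism is provided for computing $\|y-\Delta(x)\|$ for a general $x\in S(\ell_\infty(\Gamma))$. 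Moreover, even the functional agreement you do verify is incomplete: at an index $n_i$ with $0<|\alpha_i|<1$ the element $u$ lies in no maximal face $A(n_i,\lambda)$, and Lemma \ref{l support} gives no information about $\varphi\Delta(u)$ for $\varphi\in\hbox{supp}(n_i,\lambda)$ when $0<|u(n_i)|<1$; these are precisely the coordinates that require the hardest work.

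Note also that the paper proves the two halves in the opposite order, and for a reason. The complete $M$-orthogonality is the easy half: by Proposition \ref{p gathered properties} and the isometry property, $\bigl\|\Delta(v)+\sum_j\sigma_{n_j}(\lambda_j)\Delta(e_{n_j})\bigr\|=\bigl\|\Delta(v)-\Delta\bigl(-\sum_j\lambda_je_{n_j}\bigr)\bigr\|=\bigl\|v+\sum_j\lambda_je_{n_j}\bigr\|=1$, and Lemma \ref{l new charact of CM orthogo} applies. This is then \emph{used} to prove the identity: it guarantees that $\Delta(v)+\sum_j\sigma_{n_j}(\alpha_j)\Delta(e_{n_j})$ has norm one, hence by surjectivity equals $\Delta(x)$ for some $x\in S(\ell_\infty(\Gamma))$, and the problem becomes a coordinatewise identification of $x$ inside $\ell_\infty(\Gamma)$ rather than a vector identity in the unknown space $X$. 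The coordinates where $|x(m)|=1$ are handled by support functionals (as in your sketch), the coordinates off all the supports by the estimate $|x(m)\pm1|\leq1$, and the coordinates $n_{j_0}$ with $0<|\alpha_{j_0}|<1$ --- the ones your argument does not reach --- by an induction on $k$ together with an auxiliary preimage $y$ of the element obtained by rescaling $\sigma_{n_{j_0}}(\alpha_{j_0})$ to $\sigma_{n_{j_0}}(\alpha_{j_0})/|\alpha_{j_0}|$, and the two norm computations $\|x-y\|=1-|\alpha_{j_0}|$ and $\bigl\|x+\frac{\alpha_{j_0}}{|\alpha_{j_0}|}e_{n_{j_0}}\bigr\|=1+|\alpha_{j_0}|$, which together pin down $x(n_{j_0})=\alpha_{j_0}$. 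You would need to supply an argument of this kind (or a genuine substitute) for your proof to be complete.
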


\begin{proof} We observe that when $v=0$ our result follows from Proposition \ref{p gathered properties}. To prove the first statement let us take $\lambda_1,\ldots, \lambda_k$ in $\mathbb{T}$. By Proposition \ref{p gathered properties} we have $$\left\| \Delta (v)+\sum_{j=1}^{k} \sigma_{n_j} (\lambda_j) \Delta( e_{n_j} ) \right\| = \left\| \Delta (v)-\Delta\left(- \sum_{j=1}^{k} \lambda_j  e_{n_j} \right) \right\| = \left\| v + \sum_{j=1}^{k} \lambda_j  e_{n_j} \right\| = 1.$$ Lemma \ref{l new charact of CM orthogo} implies that the set $\{\Delta(v), \Delta(e_{n_1}),\ldots, \Delta(e_{n_k}) \}$ is completely $M$-ortho-gonal.\smallskip

We consider now the second statement. Let us pick $\alpha_1,\ldots,\alpha_k\in \mathbb{C}\backslash\{0\}$ with $\max\{|\alpha_1|,\ldots,|\alpha_k|\}$ $=$ $1$. Since  $\{\Delta(v), \Delta(e_{n_1}),\ldots, \Delta(e_{n_k}) \}$ is completely $M$-ortho-gonal, the element $\displaystyle \Delta (v)+\sum_{j=1}^{k} \sigma_{n_j} (\alpha_j) \Delta( e_{n_j} )$ lies in the unit sphere of $X$. Therefore there exists a function $x\in S(\ell_{\infty}(\Gamma))$ satisfying $\Delta(x) = \displaystyle \Delta (v)+\sum_{j=1}^{k} \sigma_{n_j} (\alpha_j) \Delta( e_{n_j} )$. We shall prove that $\displaystyle x = v+\sum_{j=1}^k \alpha_j e_{n_j}$.

We shall argue by induction on $k$. However, prior to the induction argument, we shall first establish some facts valid for an arbitrary $k$.
Let us begin with an element $e_m$ such that $e_m$ is orthogonal to $v$, $e_{n_1},\ldots, e_{n_k}$. Since, by the first statement, the set $\{\Delta(v), \Delta(e_{n_1}),\ldots, \Delta(e_{n_k}), \Delta(e_m) \}$ is completely $M$-orthogonal, it follows from Proposition \ref{p gathered properties} that $$|x(m)\pm 1| \!\leq\! \|x\pm e_m\| \!=\! \| \Delta (x) \pm \Delta(e_m)\|\! =\!\left\|\Delta (v)+\sum_{j=1}^{k} \sigma_{n_j} (\alpha_j) \Delta( e_{n_j} )\pm \Delta(e_m)\right\|=1.$$ Consequently, $|x(m)\pm 1| \leq 1$, and hence $x(m)=0$. That is $$ x(m) = 0, \hbox{ for all } m\in \Gamma \hbox{ such that } e_m \perp v, e_{n_1},\ldots, e_{n_k}.
$$

Take now $m\in \Gamma$ such that $|v(m)|=1$. Given $\varphi\in \hbox{supp}(m,v(m))$. Having in mind that $e_{n_j} (m)=0$, for all $1\leq j\leq k$, Lemma \ref{l support} implies that $$\varphi\Delta(x) = \varphi \Delta (v)+\sum_{j=1}^{k} \sigma_{n_j} (\alpha_j) \varphi \Delta( e_{n_j} ) = 1,$$ which proves that $\Delta(x)\in \varphi^{-1}\{1\}\cap S(X) = \Delta(A(m,v(m))),$ and thus $x(m) = v(m)$. Similar arguments are also valid for all $m\in\{n_1,\ldots, n_k\}$ with $|\alpha_m|=1$. Therefore,
$ x(m) = v(m), \forall m\in \Gamma \hbox{ with } |v(m)|=1,$ and $x(m) = \alpha_m,$ $\forall m\in\{n_1,\ldots, n_k\}$ with $|\alpha_m|=1.$\smallskip

We have proved that if $x$ is an element in $S(\ell_{\infty} (\Gamma))$ with $\Delta(x) = \displaystyle \Delta (v)+\sum_{j=1}^{k} \sigma_{n_j} (\alpha_j) \Delta( e_{n_j} )$ then \begin{equation}\label{eq zero in the orthogonal of all 2} x(m) = 0, \hbox{ for all } m\in \Gamma \hbox{ such that } e_m \perp v, e_{n_1},\ldots, e_{n_k},
\end{equation} and  \begin{equation}\label{eq one on the support 2} x(m) = v(m), \forall m\in \Gamma \hbox{ with } |v(m)|=1,
\end{equation} $$\hbox{ and  } x(m) = \alpha_m, \forall m\in\{n_1,\ldots, n_k\} \hbox{ with } |\alpha_m|=1.$$

We prove now the identity in \eqref{eq formula for a pi and finitely many minimal pi} by induction on $k$. Let us assume that $k=1$. If $|\alpha_1|=1$, the desired equality follows from \eqref{eq one on the support 2} above. We can therefore assume, via \eqref{eq zero in the orthogonal of all 2} and \eqref{eq one on the support 2}, that $0<|\alpha_1|<1$ and  $$\Delta(x) = \Delta (v) + \sigma_{n_1} (\alpha_1) \Delta(e_{n_1}),\hbox{ and } x= v+ x(n_1) e_{n_1}.$$ Having in mind that $\Delta (v)$ and $\Delta (e_{n_1})$ are completely $M$-orthogonal, we can find $y\in S(\ell_{\infty}(\Gamma))$ satisfying $$ \Delta(y) = \Delta (v) + \frac{\sigma_{n_1} (\alpha_1)}{|\alpha_1|} \Delta(e_{n_1}).$$ By applying \eqref{eq zero in the orthogonal of all 2} and \eqref{eq one on the support 2} we deduce that $y = v + \frac{\alpha_1}{|\alpha_1|} e_{n_1}.$ We also know that $$1-|\alpha_1| =\left|\sigma_{n_1} (\alpha_1)- \frac{\sigma_{n_1} (\alpha_1)}{|\alpha_1|} \right|=\|\Delta (x) - \Delta(y) \|=\|x-y\| = \left|x(n_1)  - \frac{\alpha_1}{|\alpha_1|} \right|.$$ By Proposition \ref{p gathered properties} and the fact that $\Delta (v)$ and $\Delta (e_{n_1})$ are completely $M$-orthogonal we get  $$1+|\alpha_1| =\left| \sigma_{n_1} (\alpha_1) + \frac{\sigma_{n_1} (\alpha_1)}{|\alpha_1|} \right| =\left\|\Delta (x) + \frac{\sigma_{n_1} (\alpha_1)}{|\alpha_1|} \Delta(e_{n_1}) \right\|=\left\|x+ \frac{\alpha_1}{|\alpha_1|} e_{n_1} \right\| $$ $$ = \max\left\{ 1, \left| x(n_1) + \frac{\alpha_1}{|\alpha_1|} \right| \right\}=\left| x(n_1) + \frac{\alpha_1}{|\alpha_1|} \right|.$$ The equalities $\left| x(n_1) + \frac{\alpha_1}{|\alpha_1|} \right| = 1+|\alpha_1|$ and $1-|\alpha_1| = \left|x(n_1)  - \frac{\alpha_1}{|\alpha_1|} \right|$ prove $x(n_1) =\alpha_1$, as desired. The concludes the proof of the case $k=1$.\smallskip

Suppose, by the induction hypothesis, that \eqref{eq formula for a pi and finitely many minimal pi} holds whenever ${n_1},\ldots, {n_m}$ are different elements in $\Gamma$ with $m\leq k$, and every partial isometry $w$ orthogonal to $e_{n_1},\ldots, e_{n_m}$. Let us assume that ${n_1},\ldots, {n_{k+1}}$ are different elements in $\Gamma$, and $v$ is a non-zero partial isometry orthogonal to $e_{n_1},\ldots, e_{n_{k+1}}$. Since the set $\{\Delta(v), \Delta(e_{n_1}),\ldots, \Delta(e_{n_{k+1}}) \}$ is completely $M$-orthogonal, there exists $x\in S(\ell_{\infty} (\Gamma))$ with $$\Delta(x) = \Delta (v)+\sum_{j=1}^{k+1} \sigma_{n_j} (\alpha_j) \Delta( e_{n_j} ),$$ where $\alpha_1,\ldots,\alpha_k\in \mathbb{C}\backslash\{0\}$ with $\max\{|\alpha_1|,\ldots,|\alpha_k|\}= 1$.\smallskip

By applying \eqref{eq zero in the orthogonal of all 2} and \eqref{eq one on the support 2} we deduce that \begin{equation}\label{eq expresion for x} x = v+\sum_{j=1}^{k+1} x(n_j) e_{n_j},
 \end{equation} and $x(n_j) = \alpha_j$ if $|\alpha_j|=1$. Therefore, if $|\alpha_{j_0}|=1$ for some $j_0\in \{1,\ldots,k+1\}$, applying the induction hypothesis with $k=2$ and $k$ we have $$\Delta (x) = \Delta (v)+ \sigma_{n_{j_0}} (\alpha_{j_0}) \Delta( e_{n_{j_0}} ) +\sum_{j=1, j\neq j_{0}}^{k+1} \sigma_{n_j} (\alpha_j) \Delta( e_{n_j} ) $$ $$= \Delta (v+ \alpha_{j_0}  e_{n_{j_0}} ) +\sum_{j=1, j\neq j_{0}}^{k+1} \sigma_{n_j} (\alpha_j) \Delta( e_{n_j} ) = \Delta\left( v+ \alpha_{j_0}  e_{n_{j_0}} + \sum_{j=1, j\neq j_{0}}^{k+1} \alpha_j  e_{n_j} \right),$$ because $v+ \alpha_{j_0}  e_{n_{j_0}}$ is a partial isometry orthogonal to $\{e_{n_j} : j\neq j_{0}, j_1\}$. This gives the desired statement.\smallskip

We can thus assume that $|\alpha_{j}|<1$, for all $1\leq j\leq k+1$. Pick an arbitrary index $j_0\in \{1,\ldots,k+1\}$. Arguing as above, we can find $y\in S(\ell_{\infty} (\Gamma))$ with $$\Delta(y) = \Delta (v) + \frac{\sigma_{n_{j_0}} (\alpha_{j_0})}{|\alpha_{j_0}|} \Delta(e_{n_{j_0}}) +\sum_{j=1, j\neq j_{0}}^{k+1} \sigma_{n_j} (\alpha_j) \Delta( e_{n_j} ).$$ Now, we mimic the arguments above to deduce that, since the set $\{\Delta(v), \Delta(e_{n_1}),$ $\ldots,$ $\Delta(e_{n_{k+1}}) \}$ is completely $M$-orthogonal, it follows from \eqref{eq zero in the orthogonal of all 2}, \eqref{eq one on the support 2} and the induction hypothesis that \begin{equation}\label{eq expression for y} y = v + \frac{\alpha_{j_0}}{|\alpha_{j_0}|} e_{n_{j_0}} +\sum_{j=1, j\neq j_{0}}^{k+1} \alpha_j  e_{n_j}.
\end{equation}

Applying \eqref{eq expresion for x} and \eqref{eq expression for y} and the fact that the set $\{\Delta(v), \Delta(e_{n_1}),\ldots, \Delta(e_{n_{k+1}}) \}$ is completely $M$-orthogonal, we get
$$1-|\alpha_{j_0}| =\left|\sigma_{n_{j_0}} (\alpha_{j_0})- \frac{\sigma_{n_{j_0}} (\alpha_{j_0})}{|\alpha_{j_0}|} \right|=\|\Delta (x) - \Delta(y) \|=\|x-y\| $$ $$= \max\{|x(n_j)-y(n_j)| : 1\leq j_0\leq k+1, j\neq j_0 \} \vee  \left| x(n_{j_0})  - \frac{\alpha_{j_0}}{|\alpha_{j_0}|} \right|\geq \left| x(n_{j_0})  - \frac{\alpha_{j_0}}{|\alpha_{j_0}|} \right|.$$ By Proposition \ref{p gathered properties} and the fact that the set $\{\Delta(v), \Delta(e_{n_1}),\ldots, \Delta(e_{n_{k+1}}) \}$ is completely $M$-orthogonal we also obtain:  $$1< 1+|\alpha_{j_0}| =\left| \sigma_{n_{j_0}} (\alpha_{j_0}) + \frac{\sigma_{n_{j_0}} (\alpha_{j_0})}{|\alpha_{j_0}|} \right| =\left\|\Delta (x) + \frac{\sigma_{n_{j_0}} (\alpha_{j_0})}{|\alpha_{j_0}|} \Delta(e_{n_{j_0}}) \right\| $$ $$=\left\|x+ \frac{\alpha_{j_0}}{|\alpha_{j_0}|} e_{n_{j_0}} \right\| = \max\left\{ 1, \left| x(n_{j_0}) + \frac{\alpha_{j_0}}{|\alpha_{j_0}|} \right| \right\}= \left| x(n_{j_0}) + \frac{\alpha_{j_0}}{|\alpha_{j_0}|} \right|.$$ By combining $\left| x(n_{j_0})  - \frac{\alpha_{j_0}}{|\alpha_{j_0}|} \right|\leq  1-|\alpha_{j_0}|$ and $\left| x(n_{j_0}) + \frac{\alpha_{j_0}}{|\alpha_{j_0}|} \right| = 1+|\alpha_{j_0}|$ it can be concluded that $\alpha_{j_0} = x(n_{j_0})$, which finished the induction argument and the proof.
\end{proof}

Given an  element $a\in \ell_{\infty}(\Gamma)$ and a scalar $\alpha$ in $\mathbb{C}$ we define $\alpha \odot a $ as the element in $\ell_{\infty}(\Gamma)$ whose $k$th component is $(\alpha \odot a )(k)= \sigma_k (\alpha) a(k)$. Clearly $\lambda \odot v $ is a partial isometry when $\lambda\in \mathbb{T}$ and $v$ is a partial isometry. Furthermore, $\alpha \odot a = \alpha a$ for every $a\in\ell_{\infty}(\Gamma)$ and $\alpha$ in $\mathbb{R}$.

\begin{lemma}\label{l takes out scalar with the new multiplication on projections} In the hypothesis of Proposition \ref{p gathered properties}, let $v$ be a non-zero partial isometry in $\ell_{\infty}(\Gamma)$. Then for each $\lambda\in \mathbb{T}$ we have $\lambda \Delta(v) = \Delta (\lambda \odot v)$.
\end{lemma}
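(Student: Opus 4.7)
Fix $\lambda \in \mathbb{T}$ and let $y$ be the unique element of $S(\ell_\infty(\Gamma))$ with $\Delta(y) = \lambda \Delta(v)$. The plan is to identify $y$ coordinate by coordinate with $\lambda \odot v$, which yields the desired identity $\Delta(\lambda \odot v) = \lambda \Delta(v)$. Write $S := \{k \in \Gamma : v(k) \neq 0\}$; since $v$ is a partial isometry, $|v(k)| = 1$ on $S$ and $v(k) = 0$ outside $S$.

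For $m \in \Gamma \setminus S$, the elements $v$ and $e_m$ are orthogonal, so by Proposition \ref{p projection an finitely many minimal projections are cm orthog} the pair $\{\Delta(v), \Delta(e_m)\}$ is completely $M$-orthogonal, giving $\|\lambda \Delta(v) \pm \Delta(e_m)\| = 1$. Proposition \ref{p gathered properties}(a) applied to the scalar $-1$ gives $\Delta(-e_m) = -\Delta(e_m)$, so the isometry property yields $\|y \pm e_m\| = \|\lambda \Delta(v) \pm \Delta(e_m)\| = 1$. The resulting bounds $|y(m) \pm 1| \leq 1$ force $y(m) = 0 = (\lambda \odot v)(m)$.

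For $m \in S$, put $v' := v - v(m) e_m$ (a partial isometry, possibly zero, orthogonal to $e_m$) and $\mu_0 := \sigma_m(\lambda) v(m) \in \mathbb{T}$. Formula \eqref{eq formula for a pi and finitely many minimal pi} supplies the decomposition $\Delta(v) = \Delta(v') + \sigma_m(v(m)) \Delta(e_m)$, where $\Delta(v')$ is interpreted as $0$ when $v' = 0$ (in that degenerate case one simply quotes Proposition \ref{p gathered properties}(a)--(b)). Pick $\varphi \in \mathrm{supp}(m, \mu_0)$, non-empty by Lemma \ref{l support}. Since $v'(m) = 0$, Lemma \ref{l support} gives $\varphi \Delta(v') = 0$; since $\mu_0 e_m \in A(m, \mu_0)$ and $\Delta(\mu_0 e_m) = \sigma_m(\mu_0) \Delta(e_m)$ by Proposition \ref{p gathered properties}(a)--(b), we obtain $\varphi \Delta(e_m) = \overline{\sigma_m(\mu_0)}$. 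Using $|v(m)| = 1$ and the involution $\sigma_m \circ \sigma_m = \mathrm{id}$, a short calculation yields
$$ \varphi \Delta(y) \;=\; \lambda\, \varphi \Delta(v) \;=\; \lambda\, \sigma_m(v(m))\, \overline{\sigma_m(\mu_0)} \;=\; \lambda\, \overline{\lambda} \;=\; 1, $$
so $\Delta(y) \in \varphi^{-1}(\{1\}) \cap S(X) = \Delta(A(m, \mu_0))$, and hence $y(m) = \mu_0 = (\lambda \odot v)(m)$.

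The main obstacle is the bookkeeping in the second step: selecting the correct face $A(m, \mu_0)$ twisted by $\sigma_m$, and computing $\varphi \Delta(e_m)$ via $\Delta(\mu_0 e_m)$. Once these ingredients are in place, the completely $M$-orthogonal decomposition $\Delta(v) = \Delta(v') + \sigma_m(v(m)) \Delta(e_m)$ furnished by Proposition \ref{p projection an finitely many minimal projections are cm orthog}, combined with the vanishing property of $\mathrm{supp}(m, \mu_0)$ on elements of $S(\ell_\infty(\Gamma))$ which vanish at $m$ guaranteed by Lemma \ref{l support}, fit together cleanly to yield $y = \lambda \odot v$ and hence $\lambda \Delta(v) = \Delta(\lambda \odot v)$.
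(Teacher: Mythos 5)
Your proof is correct and follows essentially the same route as the paper: off the support of $v$ you force $y(m)=0$ from $\|y\pm e_m\|=1$ via complete $M$-orthogonality, and on the support you use the decomposition $\Delta(v)=\Delta(v-v(m)e_m)+\sigma_m(v(m))\Delta(e_m)$ from Proposition \ref{p projection an finitely many minimal projections are cm orthog} together with a functional in $\mathrm{supp}(m,\sigma_m(\lambda)v(m))$ and Lemma \ref{l support} to pin down $y(m)$. The only cosmetic difference is that the paper evaluates the support functional directly on $\Delta(\sigma_m(\lambda)\xi_m e_m)$ rather than first computing $\varphi\Delta(e_m)=\overline{\sigma_m(\mu_0)}$; both computations are equivalent.
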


\begin{proof} Since $\lambda \Delta(v)$ lies in the unit sphere of $X$, and $\Delta$ is surjective, there exists $x\in \ell_{\infty}(\Gamma)$ satisfying $\Delta (x) = \lambda \Delta(v)$. Applying that $v$ is a non-zero partial isometry we can find a non-empty subset $\Gamma_0\subseteq \Gamma$ such that $v= \displaystyle \sum_{j\in \Gamma_0} \xi_j e_j$, where $\xi_j\in \mathbb{T}$ for all $j$ and the latter series converges in the weak$^*$-topology of $\ell_{\infty}(\Gamma)$. We shall show that $x=\lambda \odot v = \displaystyle \sum_{j\in \Gamma_0} \sigma_j(\lambda) \xi_j e_j$.\smallskip

If $m\in \Gamma\backslash \Gamma_0$, by applying Proposition \ref{p gathered properties} we get $$1=\|v \pm e_m\|=\left\|\Delta(v)\pm \Delta(e_m) \right\|=\left\| \Delta(x) \pm \lambda \Delta(e_m) \right\| = \left\| \Delta(x) - \Delta(\mp \sigma_m(\lambda) e_m) \right\|$$ $$ =\left\| x \pm \sigma_m(\lambda) e_m \right\| = \max\{\sup_{j\in \Gamma_0} \{ |x(j)| \}, |x(m)\pm \sigma_m(\lambda)|\},$$ which implies that $|x(m)\pm \sigma_m(\lambda)|\leq 1$, and consequently $x(m)=0$.\smallskip

Now, take $m\in \Gamma_0$. By Propositions \ref{p projection an finitely many minimal projections are cm orthog} and \ref{p gathered properties} we know that $$\Delta(x) =\lambda \Delta(v-\xi_m e_m) + \lambda \Delta(\xi_m e_m)  =\lambda \Delta(v-\xi_m e_m) +  \Delta(\sigma_m (\lambda) \xi_m e_m).$$ Pick $\phi\in \hbox{supp} (m,\sigma_m(\lambda) \xi_m)$. Since $(v- \xi_m e_m)(m) =0$, it follows from the properties defining the support (compare Lemma \ref{l support}) that $$ \phi \Delta(x) = \lambda \phi \Delta(v) = \lambda  \phi \Delta(v-\xi_m e_m) +  \phi \Delta(\sigma_m(\lambda) \xi_m e_m) = 0 + 1,$$ which shows that $\phi \Delta (x) = 1$, and hence $$\Delta (x) \in  \phi^{-1} \{1\}\cap S(X) = \Delta(A(m,\sigma_m(\lambda) \xi_m),$$ witnessing that $x(m) = \sigma_m(\lambda) \xi_m,$ which concludes the proof.
\end{proof}

We can now prove that a surjective isometry $\Delta: S(\ell_{\infty}(\Gamma)) \to S(X)$ maps finite sets of mutually orthogonal non-zero partial isometries to completely $M$-orthogonal sets.

\begin{proposition}\label{new 3.7 with mo partial isometries} In the hypothesis of Proposition \ref{p gathered properties}, let $v_1,\ldots,v_k$ be mutually orthogonal non-zero partial isometries in $\ell_{\infty}(\Gamma)$. Then the set $\{\Delta(v_1),\ldots,\Delta(v_k) \}$ is completely $M$-orthogonal, and the identity \begin{equation}\label{eq identity finite combination of mo partial siometries in the sphere} \sum_{j=1}^{k} \alpha_j \Delta( v_{j} ) = \Delta \left( \sum_{j=1}^k \alpha_j \odot v_{j} \right),
\end{equation}
holds for every $\alpha_1,\ldots,\alpha_k\in \mathbb{C}\backslash\{0\}$ with $\max\{|\alpha_1|,\ldots,|\alpha_k|\}= 1$.
\end{proposition}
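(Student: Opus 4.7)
The plan is to prove both parts of the proposition simultaneously by induction on $k\geq 1$. The base case $k=1$ is immediate from Lemma \ref{l takes out scalar with the new multiplication on projections}. For the inductive step, I assume both assertions hold for any $k-1$ mutually orthogonal non-zero partial isometries, and derive them for $k$ such elements.

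First I would establish the complete $M$-orthogonality of $\{\Delta(v_1),\ldots,\Delta(v_k)\}$. By Lemma \ref{l new charact of CM orthogo}, it is enough to show $\|\Delta(v_1)+\sum_{j=2}^{k}\lambda_{j}\Delta(v_j)\|=1$ for every choice of $\lambda_2,\ldots,\lambda_k$ in $\mathbb{T}$. The inductive formula, applied to the $k-1$ partial isometries $v_2,\ldots,v_k$ with coefficients $\lambda_j\in\mathbb{T}$, collapses the tail into $\Delta(w)$, where $w=\sum_{j=2}^{k}\lambda_{j}\odot v_j$ is a partial isometry orthogonal to $v_1$. Rewriting $\Delta(w)=-\Delta(-w)$ via Lemma \ref{l takes out scalar with the new multiplication on projections} and using that $\Delta$ is an isometry gives $\|\Delta(v_1)+\Delta(w)\|=\|v_1+w\|=1$, since $v_1$ and $w$ have disjoint supports.

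Next I would prove the formula. By the previous step, $z:=\sum_{j}\alpha_{j}\Delta(v_j)$ has norm $1$, so there is a unique $x\in S(\ell_{\infty}(\Gamma))$ with $\Delta(x)=z$. I would show $x=\sum_{j}\alpha_{j}\odot v_j$ componentwise, imitating the proof of Proposition \ref{p projection an finitely many minimal projections are cm orthog}. Denote by $\Gamma_j$ the support of $v_j$. For $m\notin\bigcup_j\Gamma_j$, choose an index $j_1$ with $|\alpha_{j_1}|=1$; for every $\beta\in\mathbb{T}$ the element $v_{j_1}+\beta e_m$ is a partial isometry orthogonal to the remaining $v_j$'s, and combining the complete $M$-orthogonality proved above with the identity $\Delta(v_{j_1}+\beta e_m)=\Delta(v_{j_1})+\sigma_m(\beta)\Delta(e_m)$ from Proposition \ref{p projection an finitely many minimal projections are cm orthog} yields $\|\Delta(x)+\alpha_{j_1}\sigma_{m}(\beta)\Delta(e_m)\|=1$; letting $\beta$ vary so that $\alpha_{j_1}\sigma_m(\beta)=\pm 1$ and converting through Lemma \ref{l takes out scalar with the new multiplication on projections} gives $\|x\pm e_m\|=1$ and therefore $x(m)=0$. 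For $m\in\Gamma_{j_0}$ with $|\alpha_{j_0}|=1$, evaluating a support functional $\phi\in\hbox{supp}(m,\sigma_m(\alpha_{j_0})v_{j_0}(m))$ on $\Delta(x)$ kills every $\Delta(v_j)$ with $j\neq j_0$ by Lemma \ref{l support}, while Lemma \ref{l takes out scalar with the new multiplication on projections} forces $\alpha_{j_0}\phi\Delta(v_{j_0})=1$; hence $\phi\Delta(x)=1$ and $x(m)=\sigma_m(\alpha_{j_0})v_{j_0}(m)$.

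The principal obstacle is the remaining case $m\in\Gamma_{j_0}$ with $|\alpha_{j_0}|<1$. Here I would pick $y\in S(\ell_{\infty}(\Gamma))$ with $\Delta(y)=\sum_{j\neq j_0}\alpha_j\Delta(v_j)+\frac{\alpha_{j_0}}{|\alpha_{j_0}|}\Delta(v_{j_0})$ (available by complete $M$-orthogonality) and set $u:=\frac{\alpha_{j_0}}{|\alpha_{j_0}|}\odot v_{j_0}$, a partial isometry with $|u(m)|=1$. Applying the $|\alpha_{j_0}|=1$ case just treated to $y$ (whose $j_0$-th coefficient has modulus $1$) gives $y(m)=u(m)$ on $\Gamma_{j_0}$; the isometry property of $\Delta$ yields $\|x-y\|=1-|\alpha_{j_0}|$, and complete $M$-orthogonality together with Lemma \ref{l takes out scalar with the new multiplication on projections} gives $\|x+u\|=1+|\alpha_{j_0}|$. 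These sup-norm bounds specialise at the single coordinate $m$ to $|x(m)-u(m)|\leq 1-|\alpha_{j_0}|$ and $|x(m)+u(m)|\leq 1+|\alpha_{j_0}|$, whose sum saturates the reverse triangle inequality $|x(m)-u(m)|+|x(m)+u(m)|\geq |2u(m)|=2$. Equality forces $x(m)$ to be a real multiple of $u(m)$ realising both moduli, which gives $x(m)=|\alpha_{j_0}|u(m)=\sigma_m(\alpha_{j_0})v_{j_0}(m)$. The subtlety absent from Proposition \ref{p projection an finitely many minimal projections are cm orthog} is that $y$ cannot be identified outside $\Gamma_{j_0}$ (it may differ from $x$ on $\Gamma_j$ for $j\neq j_0$ with $|\alpha_j|<1$); the crucial point is that the sup-norm bounds apply coordinate by coordinate, and the tight reverse triangle inequality at a single $m\in\Gamma_{j_0}$ suffices to pin $x(m)$ down without any finer knowledge of $y$.
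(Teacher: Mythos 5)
Your proof is correct and follows essentially the same strategy as the paper: complete $M$-orthogonality obtained by collapsing orthogonal partial isometries through the inductive formula and Lemma \ref{l takes out scalar with the new multiplication on projections}, support functionals to pin down $x(m)$ at coordinates governed by a unimodular coefficient, and the saturated pair of estimates $|x(m)-u(m)|\leq 1-|\alpha_{j_0}|$, $|x(m)+u(m)|= 1+|\alpha_{j_0}|$ to force $x(m)=\sigma_m(\alpha_{j_0})v_{j_0}(m)$ in the remaining case. The only organisational differences are that you run a single simultaneous induction (collapsing the tail $v_2,\ldots,v_k$ into one partial isometry rather than the first two terms) and that you identify the auxiliary element $y$ only on $\Gamma_{j_0}$ via the support-functional case instead of identifying $y$ completely as the paper does --- a mild streamlining that changes nothing essential.
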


\begin{proof} We shall first show that the set $\{\Delta(v_1),\ldots,\Delta(v_k) \}$ is completely $M$-ortho-gonal. We argue by induction on $k$. We observe that the case $k=1$ is clear. We consider the case $k=2$. Let $\lambda_1,\lambda_2$ be elements in $\mathbb{T}$. By Lemma \ref{l takes out scalar with the new multiplication on projections} and the hypothesis on $\Delta$ we have $$\left\|\lambda_1 \Delta(v_1) + \lambda_2 \Delta(v_2) \right\| =  \left\|\Delta( \lambda_1\odot v_1)- \Delta(- \lambda_2\odot v_2)\right\| = \left\| \lambda_1\odot v_1 + \lambda_2\odot v_2\right\|=1.$$ Lemma 3.4 in \cite{JVMorPeRa2017} proves that $\{\Delta(v_1),\Delta(v_2) \}$ is completely $M$-orthogonal.\smallskip

We claim that, \begin{equation}\label{eq induction identity for 2} \lambda_1 \Delta(v_1) + \lambda_2 \Delta(v_2) = \Delta (\lambda_1 \odot v_1 + \lambda_2\odot v_2),
\end{equation} for all $\lambda_1,\lambda_2$ in $\mathbb{T}$. Namely, since $\{\Delta(v_1),\Delta(v_2) \}$ is completely $M$-orthogonal, the element $\lambda_1 \Delta(v_1) + \lambda_2 \Delta(v_2)$ belongs to $S(X)$, and thus we can find $x\in S(\ell_{\infty} (\Gamma)$ with $\Delta (x) = \lambda_1 \Delta(v_1) + \lambda_2 \Delta(v_2)$. Let us find two non-empty disjoint subsets $\Gamma_1,\Gamma_2\subseteq \Gamma$ such that $\displaystyle v_j =\sum_{k\in \Gamma_j} \xi_k e_k$ for $j=1,2$, where $\xi_k\in \mathbb{T}$ for all $k\in \Gamma_j$.\smallskip

Pick $m\in \Gamma\backslash(\Gamma_1\cup \Gamma_2)$. In this case, by applying Proposition \ref{p projection an finitely many minimal projections are cm orthog}, we get $$|x(m)\pm \lambda_1|\leq \|x\pm \lambda_1 e_m\| = \|\Delta(x) \pm \sigma_{m}(\lambda_1) \Delta(e_m) \| $$ $$= \|\lambda_1 \Delta(v_1) + \lambda_2 \Delta(v_2) \pm  \sigma_{m}(\lambda_1) \Delta(e_m)\| = \| \Delta (\lambda_1 \odot v_1 +\lambda_1 e_m) - \Delta (-\lambda_2 \odot v_2) \| $$ $$= \| \lambda_1 \odot v_1 +\lambda_1 e_m +\lambda_2 \odot v_2 \| = 1,$$ where in the antepenultimate equality we have applied Proposition \ref{p projection an finitely many minimal projections are cm orthog}. This shows that $|x(m)\pm \lambda_1|\leq 1$, and hence $x(m) =0$.\smallskip

Take now $m\in \Gamma_1$. Proposition \ref{p projection an finitely many minimal projections are cm orthog} also shows that $$\Delta (x) = \lambda_1 \Delta(v_1) + \lambda_2 \Delta(v_2)= \lambda_1 \Delta(v_1- \xi_m e_m) +\lambda_1 \Delta (\xi_m e_m) + \lambda_2 \Delta(v_2)$$ $$= \lambda_1 \Delta(v_1- \xi_m e_m) + \Delta (\sigma_m(\lambda_1) \xi_m e_m) + \lambda_2 \Delta(v_2).$$  Take $\varphi\in \hbox{supp} (m,\sigma_m(\lambda_1) \xi_m)$. Since $(v_1- \xi_m e_m) (m) = v_2(m) =0$, Lemma \ref{l support} assures that $$\varphi \Delta(x) = \lambda_1 \varphi\Delta(v_1- \xi_m e_m) + \varphi\Delta (\sigma_m(\lambda_1) \xi_m e_m) + \lambda_2 \varphi\Delta(v_2) =1,$$ which proves that $\Delta(x)\in \varphi^{-1} \{1\}\cap S(X) = \Delta(A(m,\sigma_m(\lambda_1) \xi_m))$, and thus $x(m) = \sigma_m(\lambda_1) \xi_m$. Similar arguments show that $x(m) = \sigma_m(\lambda_1) \xi_m$ for all $m\in \Gamma_2$. This finishes the proof of \eqref{eq induction identity for 2}.\smallskip

We resume now the induction argument. We may assume that $k\geq 2.$ Let us assume that $v_1,\ldots, v_k,v_{k+1}$ are mutually orthogonal non-zero partial isometries in $\ell_{\infty} (\Gamma)$, and suppose that $\lambda_1,\ldots, \lambda_{k+1}$ are elements in $\mathbb{T}$. By applying \eqref{eq induction identity for 2} and the induction hypothesis we deduce that $$\left\| \sum_{j=1}^{k+1} \lambda_j \Delta (e_j) \right\| = \left\| \Delta (\lambda_1 \odot v_1 + \lambda_2\odot v_2) + \sum_{j=3}^{k+1} \lambda_j \Delta (e_j) \right\| = 1,$$ because $\lambda_1 \odot v_1 + \lambda_2\odot v_2$ is a partial isometry in $\ell_{\infty}(\Gamma)$. Lemma 3.4 in \cite{JVMorPeRa2017} implies that the set $\{\Delta(v_1),\ldots,\Delta(v_{k+1}) \}$ is completely $M$-orthogonal, which finishes the induction argument.\smallskip

We shall next prove the equality in \eqref{eq identity finite combination of mo partial siometries in the sphere}. We argue by induction on $k$. In the case $k=1$, we know that $|\alpha_1|=1,$ and hence the desired statement follows from Lemma \ref{l takes out scalar with the new multiplication on projections}.\smallskip

Take $k\geq 2$, and let us assume, by the induction hypothesis, that \eqref{eq identity finite combination of mo partial siometries in the sphere} is true when the number of non-zero partial isometries is smaller than or equal to $k-1$. We take $\alpha_1,\ldots,\alpha_k$ in $\mathbb{C}\backslash\{0\}$ with $\max\{|\alpha_1|,\ldots,|\alpha_k|\}= 1.$ It follows from the first statement of this proposition that the set $\{\Delta(v_1),\ldots,\Delta(v_{k}) \}$ is completely $M$-orthogonal. Therefore the element $\displaystyle\sum_{j=1}^{k} \alpha_j \Delta(v_j)$ lies in the unit sphere of $X,$ and hence there exists $x$ in $S(\ell_{\infty} (\Gamma))$ satisfying $\Delta (x) = \displaystyle\sum_{j=1}^{k} \alpha_j \Delta(v_j)$.\smallskip

Suppose there exists two different indices $j_1\neq j_2$ in $\{1,\dots,k\}$ with $|\alpha_{j_1}|=|\alpha_{j_2}|=1$. Having in mind that $\alpha_{j_1} \odot v_{j_1} + \alpha_{j_2} \odot v_{j_2} $ is a partial isometry, we apply \eqref{eq induction identity for 2} and the induction hypothesis to deduce the following
$$ \Delta(x) = \alpha_{j_1} \Delta(v_{j_1}) + \alpha_{j_2} \Delta(v_{j_2})+\sum_{j=1, j\neq j_1,j_2}^{k} \alpha_j \Delta(v_j) $$ $$= \Delta(\alpha_{j_1} \odot v_{j_1} + \alpha_{j_2} \odot v_{j_2}) + \sum_{j=1, j\neq j_1,j_2}^{k} \alpha_j \Delta(v_j)$$ $$ = \Delta\left (\alpha_{j_1} \odot v_{j_1} + \alpha_{j_2} \odot v_{j_2} + \sum_{j=1, j\neq j_1,j_2}^{k} \alpha_j \odot v_j \right).$$ Therefore $$x = \alpha_{j_1} \odot v_{j_1} + \alpha_{j_2} \odot v_{j_2} + \sum_{j=1, j\neq j_1,j_2}^{k} \alpha_j \odot v_j.$$

We can therefore assume the existence of a unique $j_0$ in $\{1,\ldots,k\}$ such that $|\alpha_{j_0}|=1.$ We pick another $j_1 \in\{1,\ldots,k\}$ with $j_1\neq j_0$. Since the set $\{\Delta(v_1),$ $\ldots,$ $\Delta(v_{k}) \}$ is completely $M$-orthogonal, there exists $y\in S(\ell_{\infty}(\Gamma))$ satisfying $$\Delta(y) = \alpha_{j_0} \Delta(v_{j_0})+ \frac{\alpha_{j_1}}{|\alpha_{j_1}|} \Delta(v_{j_1}) + \sum_{j=1}^{k} \alpha_j \Delta(v_j).$$ Having in mind that $|\alpha_{j_0}|=1=\frac{\alpha_{j_1}}{|\alpha_{j_1}|}$, by the arguments in the previous paragraph we have $$ y = \alpha_{j_0} \odot v_{j_0} + \frac{\alpha_{j_1}}{|\alpha_{j_1}|} \odot v_{j_1} + \sum_{j=1, j\neq j_0,j_1}^{k} \alpha_j \odot v_j.$$

On the other hand, since $v_1,\ldots,v_k$ are mutually orthogonal non-zero partial isometries, there exist non-empty disjoint subsets $\Gamma_1,\ldots,\Gamma_k\subseteq \Gamma$ such that $\displaystyle v_j =\sum_{m\in \Gamma_j} \xi_m e_m$ for $j=1,\ldots,k$, where $\xi_m\in \mathbb{T}$ for all $m\in \Gamma_j$.\smallskip

Let us now pick $m\in \Gamma_{j_1}$. By applying that $\{\Delta(v_1),$ $\ldots,$ $\Delta(v_{k}) \}$ is a completely $M$-orthogonal set we get \begin{equation}\label{eq 24 one k+1 induction hypothesis} 1 - |\alpha_{j_1}|= \left| \alpha_{j_1} - \frac{\alpha_{j_1}}{|\alpha_{j_1}|}\right| =\|\Delta(x) - \Delta(y) \| =\|x-y\| \geq \left|x(m) - \frac{\sigma_m(\alpha_{j_1})}{|\alpha_{j_1}|} \xi_m \right|.
 \end{equation} By similar arguments, Proposition \ref{p projection an finitely many minimal projections are cm orthog}, Lemma \ref{l takes out scalar with the new multiplication on projections}, and the fact that the set $$\{\Delta(v_{j_1}- \xi_m e_m), \Delta(\xi_m e_m),\}\cup\{\Delta(v_{j_1}) : 1\leq j\leq k, {j_1}\neq j \}$$ is completely $M$-orthogonal, can be applied to get \begin{equation}\label{eq 24 two k+1 induction hypothesis} \left|x(m) + \frac{\sigma_m(\alpha_{j_1})}{|\alpha_{j_1}|} \xi_m \right| \leq \left\|x + \frac{\sigma_m(\alpha_{j_1})}{|\alpha_{j_1}|} \xi_m e_m \right\|=\left\|\Delta(x) + \frac{\alpha_{j_1}}{|\alpha_{j_1}|} \Delta(\xi_m e_m) \right\| \end{equation}
$$=\left\|\alpha_{j_1} \Delta(v_{j_1}- \xi_m e_m)+ \alpha_{j_1} \Delta(\xi_m e_m) +\sum_{j=1, j\neq j_1}^{k} \alpha_j \Delta(v_j)  + \frac{\alpha_{j_1}}{|\alpha_{j_1}|} \Delta(\xi_m e_m) \right\| $$  $$=\max\{ |\alpha_j| : j\neq j_1\}\vee \left| \alpha_{j_1} + \frac{\alpha_{j_1}}{|\alpha_{j_1}|}\right| = 1 + |\alpha_{j_1}|.$$ Combining \eqref{eq 24 one k+1 induction hypothesis} and \eqref{eq 24 two k+1 induction hypothesis} we establish that $x(m) = {\sigma_m(\alpha_{j_1})} \xi_m$. The arbitrariness of $j_1$ and $m\in \Gamma_1$ assures that $$x = \sum_{j=1}^k \sum_{m\in \Gamma_j} {\sigma_m(\alpha_{j_1})} \xi_m e_m = \sum_{j=1}^k \alpha_j \odot \left(\sum_{m\in \Gamma_j} \xi_m e_m\right) = \sum_{j=1}^k \alpha_j \odot v_{j},$$ which concludes the induction argument and the proof.
\end{proof}

We are now in position to prove the main result of this note.

\begin{proof}[Proof of Theorem \ref{t ellinfty satifies the MU}] Let $\Delta : S(\ell_\infty (\Gamma))\to S(X)$ be a surjective linear isometry. We consider the homogeneous extension $F:\ell_\infty (\Gamma)\to X$, defined by $F(0)=0$ and $F(x) = \|x\| \Delta( \frac{1}{\|x\|} x)$ for all $x\in X\backslash\{0\}.$\smallskip

Let us fix an arbitrary set $\{v_1,\ldots, v_k\}$ of mutually orthogonal non-zero partial isometries in $\ell_{\infty} (\Gamma)$. Suppose that $a,b\in \ell_{\infty} (\Gamma)\backslash\{0\}$ can be written in the form $\displaystyle a = \sum_{j=1}^k \alpha_j\odot v_j,$ $\displaystyle b = \sum_{j=1}^k \beta_j\odot v_j,$ where $\alpha_j,\beta_j\in \mathbb{C}\backslash \{0\}$. If $a-b = 0,$ then Proposition \ref{new 3.7 with mo partial isometries} assures that $F(a) = \frac{1}{\|a\|} \Delta(a) = \frac{1}{\|a\|} (- \Delta(-a)) = - \frac{1}{\|b\|} \Delta(b) = - F(b)$, and hence $F(a+b) = 0 = F(a) + F(b).$\smallskip

Let us assume that $a+b\neq 0$. By definition and Proposition \ref{new 3.7 with mo partial isometries} we have $$F(a) = \|a\| \Delta \left(\frac{1}{\|a\|} a\right)= \|a\| \Delta   \left(\sum_{j=1}^k \frac{\alpha_j}{\|a\|} \odot v_j \right) $$ $$= \|a\| \sum_{j=1}^k \frac{\alpha_j}{\|a\|} \Delta (v_j) = \sum_{j=1}^k \alpha_j \Delta (v_j),$$
$$F(b) = \|b\| \Delta \left(\frac{1}{\|b\|} b\right)= \|b\| \Delta   \left(\sum_{j=1}^k \frac{\beta_j}{\|b\|} \odot v_j \right) = \sum_{j=1}^k {\beta_j} \Delta (v_j),$$
$$F(a+b) = \|a+b\| \Delta \left(\frac{1}{\|a+b\|} (a+b)\right)= \|a+b\| \Delta   \left(\sum_{j=1}^k \frac{\alpha_j+\beta_j}{\|a+b\|} \odot v_j \right)=  \sum_{j=1}^k (\alpha_j+\beta_j) \Delta (v_j).$$ Therefore, $F(a) + F(b) = F(a) + F(b).$\smallskip

Let us observe that, given $a,b\in \ell_{\infty} (\Gamma)\backslash \{0\}$, we have $$ \|F(a)-F(b) \| = \left\| \|a\| \Delta \left(\frac{1}{\|a\|} a\right) - \|b\| \Delta \left(\frac{1}{\|b\|} b\right) \right\|$$ $$\leq \|a\| \left\|\Delta \left(\frac{1}{\|a\|} a\right) -  \Delta \left(\frac{1}{\|b\|} b\right) \right\| + \left\|  \Delta \left(\frac{1}{\|b\|} b\right) \right\| \|a-b\|  $$ $$ = \|a\| \left\|\frac{1}{\|a\|} a  -   \frac{1}{\|b\|} b \right\| +  \|a-b\| \leq 3 \|a-b\|,$$ and then $F$ is a Lipschitz mapping.\smallskip

Finally, given $a,b\in \ell_{\infty} (\Gamma)\backslash\{0\}$ and $\varepsilon >0$ we can find a set $\{v_1,\ldots,v_k\}$ of mutually orthogonal non-zero partial isometries and $\alpha_1,\beta_1,\ldots,\alpha_k,\beta_k \in \mathbb{C}\backslash\{0\}$ such that $\displaystyle \left\|a - a_k \right\|<\varepsilon$ and $ \left\|b - b_k \right\|<\varepsilon,$ where $\displaystyle a_k = \sum_{j=1}^k \alpha_j\odot v_j$, and $\displaystyle b_k=\sum_{j=1}^k \beta_j\odot v_j$. Since, by the arguments in the first part of this proof, we know that $F(a_k + b_k ) = F(a_k) + F(b_k),$ and $F$ is a Lipschitz mapping, we deduce, from the arbitrariness of $\varepsilon>0$, that $F(a+ b) = F(a) + F(b)$ for all $a,b\in \ell_{\infty} (\Gamma)$, which concludes the proof.
\end{proof}

\medskip\medskip

\textbf{Acknowledgements} Author partially supported by the Spanish Ministry of Economy and Competitiveness (MINECO) and European Regional Development Fund project no. MTM2014-58984-P and Junta de Andaluc\'{\i}a grant FQM375.

\end{document}